\documentclass[11pt]{amsart}
\usepackage{amsmath,mathrsfs}

\begin{document}
\theoremstyle{theorem}
\newtheorem{thm}{Theorem}
\newtheorem{lem}{Lemma}
\newtheorem{prop}{Proposition}
\newtheorem{cor}{Corollary}
\theoremstyle{remark}
\newtheorem{rem}{Remark}
\newtheorem*{ack}{Acknowledgement}

\def\Z{{\mathbb Z}} 
\def\C{{\mathbb C}} 
\def\R{{\mathbb R}} 

\def\A{{\mathfrak A}}
\def\B{{\mathcal B}}
\def\H{{\mathcal H}}

\def\L{{\mathfrak L}}
\def\LH{{\mathfrak L(\H)}}
\def\ran{\mathrm{ran}}

\mathchardef\ordinarycolon\mathcode`\: 
\def\vcentcolon{\mathrel{\mathop\ordinarycolon}} 
\providecommand*\coloneqq{\mathrel{\vcentcolon\mkern-1.2mu}=}

\title[A simple proof of the Fredholm Alternative]{A simple proof of the Fredholm Alternative}
\author{Otgonbayar Uuye}
\address{University of Copenhagen\\Universitetsparken 5\\DK-2100 Copenhagen E\\Denmark}
\date{\today}
\begin{abstract}
In this expository note, we present a simple proof of the Fredholm Alternative for compact operators that are norm limits of finite rank operators.
\end{abstract}

\maketitle

We recall that a bounded linear operator $T: E \to E$ on a Banach space $E$ is called a {\em compact} operator if $T$ maps the closed unit ball of $E$ to a relatively compact subset of $E$. The following is a basic result about compact operators known as the {\em Fredholm Alternative}. See for instance \cite{MR1157815}.
\begin{thm}\label{thm FA} Let $T$ be a compact operator on a Banach space $E$ and let $I$ denote the identity operator on $E$. Then either
\begin{enumerate}
\item[(i)] the operator $I - T$ is invertible (i.e.\ has a bounded inverse), or 
\item[(ii)] there exists a nonzero vector $y \in E$, $y \ne 0$, such that $Ty = y$.
\end{enumerate}
\end{thm}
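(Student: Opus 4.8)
The plan is to prove the classical Riesz--Schauder dichotomy by studying the single operator $S \coloneqq I - T$. The two alternatives are mutually exclusive: if $S$ is invertible it is injective, so no nonzero $y$ can satisfy $Ty = y$; conversely, such a $y$ gives $Sy = 0$, so $S$ is not injective and hence not invertible. It therefore suffices to show that whenever (ii) fails---that is, whenever $S$ is injective---option (i) holds. So I assume throughout that $S$ is injective and aim to prove that $S$ is onto with a bounded inverse.

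The first ingredient is a boundedness-below lemma: any injective operator of the form $I - K$ with $K$ compact satisfies $\|(I-K)x\| \ge c\|x\|$ for some $c > 0$ and all $x$. If this failed, I could choose unit vectors $x_n$ with $(I-K)x_n \to 0$; compactness of $K$ gives a subsequence along which $Kx_n \to y$, whence $x_n = (I-K)x_n + Kx_n \to y$. Then $\|y\| = 1$ and $(I-K)y = 0$, contradicting injectivity. A bounded-below operator on a Banach space has complete, hence closed, range, so in particular $\ran(I-K)$ is closed.

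I then apply this to the powers of $S$. Expanding $S^n = (I-T)^n = I - T_n$ binomially exhibits $T_n$ as a polynomial in $T$ with no constant term, hence compact (compact operators form an ideal). Since $S$ is injective, so is every $S^n$, and therefore each range $E_n \coloneqq \ran(S^n)$ is a closed subspace. This produces a descending chain $E = E_0 \supseteq E_1 \supseteq E_2 \supseteq \cdots$ with $S(E_n) = E_{n+1}$.

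The crux---and the step I expect to be the main obstacle---is to show this chain stabilises at $E$, i.e.\ that $S$ is surjective. Suppose instead $E_1 \ne E_0$. Since $S$ is injective, $S(A) = S(B)$ forces $A = B$ for subsets $A, B$, so properness propagates: from $E_1 \ne E_0$ one obtains $E_{n+1} \ne E_n$ for every $n$. By Riesz's lemma I choose unit vectors $x_n \in E_n$ with $\mathrm{dist}(x_n, E_{n+1}) \ge 1/2$. For $m > n$ the vector $S x_n + x_m - S x_m$ lies in $E_{n+1}$, whence
\[
\|T x_n - T x_m\| = \|x_n - (S x_n + x_m - S x_m)\| \ge \mathrm{dist}(x_n, E_{n+1}) \ge \tfrac12 .
\]
Thus $(T x_n)$ admits no convergent subsequence even though $(x_n)$ is bounded, contradicting compactness of $T$. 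Hence $E_1 = E$ and $S$ is onto. Being injective, surjective, and bounded below, $S$ has a bounded inverse, which is option (i), completing the dichotomy.
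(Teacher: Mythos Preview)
Your argument is the classical Riesz--Schauder proof, and it is correct as written: the bounded-below lemma, the observation that $S^n = I - T_n$ with $T_n$ compact, and the Riesz-lemma separation of the strictly decreasing chain $E_n = \ran(S^n)$ are all carried out cleanly, and the contradiction with compactness of $T$ is the standard one.

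The paper, however, does not prove Theorem~\ref{thm FA} in this generality at all. It states Theorem~\ref{thm FA} with a reference and instead proves Theorem~\ref{thm alt FA}, which assumes only that $T$ lies within norm distance $1$ of a finite-rank operator. The mechanism is purely algebraic: writing $R = I - (T - F)$ (invertible by the Neumann series) and $S = F R^{-1}$, one has $I - T = (I - S)R$, and since $S$ has finite-dimensional range $M$, injectivity and surjectivity of $I - S$ on $E$ are equivalent to those of its restriction to $M$, where they coincide by finite-dimensional linear algebra. The Open Mapping Theorem then finishes. This recovers Theorem~\ref{thm FA} only for Banach spaces with the approximation property. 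So your route is strictly more general and uses genuine compactness twice (for bounded-below and for the Riesz-lemma contradiction), whereas the paper's route trades generality for an essentially linear-algebraic argument that never invokes Riesz's lemma or a range-chain.
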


In this note, we give a simple proof of the following version (Theorem~\ref{thm alt FA}) of the Fredholm Alternative.

 A {\em finite-rank} operator is a bounded linear operator whose range is  finite dimensional. 

\begin{thm}\label{thm alt FA} Let $T$ be a bounded linear operator on a Banach space $E$. Suppose that there exists a finite-rank operator $F$ such that $||T-F|| < 1$. Then either
\begin{enumerate}
\item[(i)] the operator $I - T$ is invertible, or 
\item[(ii)] there exists a nonzero vector $y \in E$, $y \ne 0$, such that $Ty = y$.
\end{enumerate}
\end{thm}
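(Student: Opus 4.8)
The plan is to peel off the small-norm part with a Neumann series and thereby reduce the problem first to the finite-rank case and then to a statement in linear algebra. Set $S \coloneqq T - F$, so that $\|S\| < 1$; since $E$ is complete, the Neumann series $\sum_{n \geq 0} S^{n}$ converges in norm and exhibits $I - S$ as invertible with bounded inverse. The key step is the factorization
\[
I - T = (I - S) - F = (I - S)\bigl(I - (I-S)^{-1}F\bigr).
\]
Putting $G \coloneqq (I - S)^{-1}F$, this reads $I - T = (I-S)(I-G)$. As $I - S$ is invertible, $I - T$ is invertible if and only if $I - G$ is, and $\ker(I - T) = \ker(I - G)$; so it is enough to prove the alternative for $I - G$. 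Moreover $G$ is finite-rank, since $\ran G = (I-S)^{-1}(\ran F)$ and $(I-S)^{-1}$ is a linear isomorphism, whence $\dim \ran G = \dim \ran F < \infty$.

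Next I would pass to the finite-dimensional subspace $V \coloneqq \ran G$. Since $G(E) = V$, the operator $G$ carries $V$ into itself; write $G_{V} \colon V \to V$ for the restriction. If $x \in \ker(I - G)$ then $x = Gx \in V$, whence $x \in \ker(I_{V} - G_{V})$; the reverse inclusion $\ker(I_{V} - G_{V}) \subseteq \ker(I - G)$ is trivial, so $\ker(I - G) = \ker(I_{V} - G_{V})$. Now suppose alternative (ii) fails, i.e.\ $\ker(I - T) = \{0\}$. Then $\ker(I_{V} - G_{V}) = \{0\}$, so $I_{V} - G_{V}$ is an injective endomorphism of the finite-dimensional space $V$, hence invertible there by the rank--nullity theorem.

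It remains to promote this to invertibility of $I - G$ on all of $E$. I would check directly that the bounded operator $R \colon E \to E$ given by $Rz \coloneqq z + (I_{V} - G_{V})^{-1}Gz$ is a two-sided inverse of $I - G$: writing $w \coloneqq (I_{V} - G_{V})^{-1}Gz \in V$, one has $w - Gw = Gz$, so $(I - G)Rz = (z + w) - (Gz + Gw) = z$, and since $Gz = (I_V - G_V)(Gx)$ when $z = (I-G)x$, a similar short computation gives $R(I - G)x = x$. Boundedness of $R$ is clear, as it is the identity plus the composition of the bounded operators $G$ and $(I_{V} - G_{V})^{-1}$. (Alternatively, one could invoke the open mapping theorem once $I - G$ is known to be bijective.) Hence $I - G$, and therefore $I - T$, is invertible, which is alternative (i).

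The argument is short, and the one genuine idea is the factorization $I - T = (I - S)(I - G)$ that isolates the finite-rank part; once there, the dichotomy is just the elementary fact that an injective operator on a finite-dimensional space is invertible. The only points needing a little care — that $G$ stays finite-rank and that $R$ really is a two-sided inverse — are routine verifications.
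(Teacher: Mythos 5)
Your proof is correct, and its overall strategy coincides with the paper's: invert $I-(T-F)$ by a Neumann series, factor $I-T$ as an invertible operator composed with a finite-rank perturbation of the identity, and settle the question on the finite-dimensional range of that finite-rank operator. Two details differ. First, you factor on the other side: writing $R = I-(T-F)$, the paper uses $I-T = (I - FR^{-1})\circ R$ (its Corollary~\ref{cor R-F} with $S = F\circ R^{-1}$), whereas you use $I-T = R\circ(I - R^{-1}F)$ with $G = R^{-1}F$; both work equally well and both preserve finite rank. Second, and more substantively, the paper's Lemma~\ref{lem rk} transfers \emph{both} injectivity and surjectivity between $E$ and the finite-dimensional range, concludes that $I-T$ is bijective, and then invokes the Open Mapping Theorem to obtain a bounded inverse. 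You use only the injectivity transfer and then exhibit an explicit two-sided inverse $z \mapsto z + (I_{V}-G_{V})^{-1}Gz$ of $I-G$, whose boundedness is immediate because $(I_{V}-G_{V})^{-1}$ is a linear map on a finite-dimensional space; composing with $R^{-1}$ gives a bounded inverse of $I-T$. Your computations checking that this is a two-sided inverse are correct. The gain of your variant is that it bypasses both the surjectivity half of the range lemma and the Open Mapping Theorem, so the only nontrivial analytic input is the convergence of the Neumann series; the cost is a slightly longer explicit verification in place of the paper's cleaner abstract dichotomy.
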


We start with a simple lemma.
\begin{lem}\label{lem rk} Let $S: V \to V $ be a linear operator on a vector space $V$ with range $M \subseteq V$. Let $I$ denote the identity operator on $V$. Then the operator $I - S: V \to V$ restricts to a linear operator $(I - S)_{|M}: M \to M$. Consider the following statements: 
\begin{enumerate}
\item\label{IV} The operator $I - S: V \to V$ is injective.
\item\label{IM} The operator $(I-S)_{|M}: M \to M$ is injective.  
\item\label{SV} The operator $I - S: V \to V$ is surjective.
\item\label{SM} The operator $(I-S)_{|M}: M \to M$ is surjective.
\end{enumerate}
Then (\ref{IV}) $\Leftrightarrow$ (\ref{IM}), and (\ref{SV}) $\Leftrightarrow$ (\ref{SM}). 
\end{lem}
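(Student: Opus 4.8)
The plan is to reduce everything to the algebraic identity $v = Sv + (I-S)v$ together with the observation that $Sv$ always lies in $M = \ran S$. First I would dispatch the preliminary claim that $(I-S)_{|M}$ is well defined: for $m \in M$ we have $(I-S)m = m - Sm$, where $m \in M$ by assumption and $Sm \in \ran S = M$, so $(I-S)m \in M$. Note also that $M$, being the range of a linear operator, is a linear subspace of $V$, so sums of its elements stay in $M$.

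The crucial point, which I would isolate as an auxiliary observation, is that for every $v \in V$ one has $(I-S)v \in M$ if and only if $v \in M$. The forward implication is the preliminary claim just noted; for the reverse, if $(I-S)v = v - Sv \in M$, then since $Sv \in M$ as well we get $v = (v - Sv) + Sv \in M$. Specializing to $v$ with $(I-S)v = 0$ shows $\ker(I-S) \subseteq M$, and hence $\ker(I-S) = \ker\bigl((I-S)_{|M}\bigr)$.

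Granting this, (\ref{IV}) $\Leftrightarrow$ (\ref{IM}) is immediate, since injectivity of either operator amounts to the vanishing of their common kernel. For (\ref{SV}) $\Rightarrow$ (\ref{SM}): given $m \in M$, surjectivity on $V$ yields $v \in V$ with $(I-S)v = m$, and the auxiliary observation forces $v \in M$, so $m \in \ran\bigl((I-S)_{|M}\bigr)$. For (\ref{SM}) $\Rightarrow$ (\ref{SV}): given $w \in V$, we have $Sw \in M$, so surjectivity on $M$ gives $m \in M$ with $(I-S)m = Sw$; then
\[
(I-S)(w+m) = (I-S)w + (I-S)m = (w - Sw) + Sw = w,
\]
so $I-S$ is surjective on $V$.

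I expect no serious obstacle: the lemma is purely algebraic, and once the auxiliary observation $(I-S)v \in M \Leftrightarrow v \in M$ is in place, each of the four implications is a one-line argument. The only step that needs a small idea rather than unwinding definitions is the surjectivity converse, where one has to produce the corrected preimage $w + m$; and in the injectivity half one must keep in mind that the relevant kernel automatically sits inside $M$, rather than being a priori all of $V$.
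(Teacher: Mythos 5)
Your proposal is correct and follows essentially the same route as the paper: the auxiliary observation $(I-S)v \in M \Leftrightarrow v \in M$ is exactly the computation the paper performs inline for the implications (\ref{IM}) $\Rightarrow$ (\ref{IV}) and (\ref{SV}) $\Rightarrow$ (\ref{SM}), and your corrected preimage $w+m$ is the paper's $x+y$. You have merely factored the common step out as a separate claim, which is a harmless (arguably tidier) reorganization.
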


\begin{proof}
It is clear that $I-S$ maps $M$ into $M$.

The implication (\ref{IV}) $\Rightarrow$ (\ref{IM}) is also clear. 

Suppose that $(I-S)y = 0$ for some $y$ in $V$. Then $y = Sy$, hence $y$ belongs to $M$. Thus (\ref{IM}) implies (\ref{IV}).
 
Let $x$ be a vector in $M$ and suppose that $(I -S)y = x$ for some $y$ in $V$. Then $y = x + Sy$, hence $y$ belongs to $M$. Thus (\ref{SV}) implies (\ref{SM}).
 
Conversely, let $x$ be a vector in $V$ and suppose that $(I-S)y = Sx$ for some $y$ in $M$. Then $(I-S)(x + y) = x$. Hence (\ref{SM}) implies (\ref{SV}). 
\end{proof}

\begin{cor}\label{cor R-F} Let $R: V \to V$ be a linear operator on a vector space $V$.  Suppose that $R$ is bijective. Then for any finite-rank operator $F$, the operator $R - F$ is injective if and only if $R - F$ is surjective. 
\end{cor}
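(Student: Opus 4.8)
The plan is to factor $R - F = R(I - R^{-1}F)$ and reduce everything to the case $R = I$. Since $R$ is bijective, composing with $R$ on the left preserves both injectivity and surjectivity: $R \circ g$ is injective iff $g$ is injective (because $R$ is injective), and $R \circ g$ is surjective iff $g$ is surjective (because $\ran(R\circ g) = R(\ran g)$ and $R$ is a bijection). Hence $R - F$ is injective if and only if $I - R^{-1}F$ is injective, and $R - F$ is surjective if and only if $I - R^{-1}F$ is surjective. It therefore suffices to prove the equivalence for an operator of the form $I - S$ with $S \coloneqq R^{-1}F$.

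The key observation is that $S = R^{-1}F$ again has finite rank: its range is $R^{-1}(\ran F)$, the image of a finite-dimensional space under the linear map $R^{-1}$, hence finite-dimensional. Now apply Lemma~\ref{lem rk} to this $S$ with $M \coloneqq \ran S$. The lemma yields that $I - S$ is injective if and only if $(I-S)_{|M} \colon M \to M$ is injective, and that $I - S$ is surjective if and only if $(I-S)_{|M} \colon M \to M$ is surjective.

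Finally, $(I - S)_{|M}$ is an endomorphism of the \emph{finite-dimensional} vector space $M$, so by the rank–nullity theorem it is injective if and only if it is surjective. Chaining these equivalences gives the claim. There is no real obstacle here; the only steps needing (routine) care are verifying that $R^{-1}F$ has finite rank and correctly tracking which composition with $R$ (or $R^{-1}$) preserves which of the two properties.
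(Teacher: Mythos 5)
Your proof is correct and follows essentially the same route as the paper: reduce to an operator of the form $I-S$ with $S$ of finite rank, apply Lemma~\ref{lem rk} to restrict to the finite-dimensional range of $S$, and finish with rank--nullity. The only (immaterial) difference is that you take $S = R^{-1}F$ and factor $R - F = R\circ(I-S)$, whereas the paper takes $S = F\circ R^{-1}$ and factors $R - F = (I-S)\circ R$.
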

\begin{proof}
Let $S: V \to V$ denote the operator $F \circ R^{-1}$ and let $M$ denote the range of $S$. Then $M$ is finite dimensional, and thus $(I-S)_{|M}: M \to M$ is injective if and only if $(I-S)_{|M}: M \to M$ is surjective. It follows from Lemma~\ref{lem rk} that $I-S$ is injective if and only if $I-S$ is surjective.
However, since $R$ is bijective, it follows from the identity $$R - F = (I - S) \circ R$$ that $R - F$ is injective if and only if $I - S$ is injective, and $R - F$ is surjective if and only if $I - S$ is surjective. This completes the proof.
\end{proof}

\begin{prop}\label{prop I - T} Let $T: V \to V$ be a linear operator on a vector space $V$. Suppose that there exists a finite-rank operator $F$ such that the operator $I - (T - F): V \to V$ is bijective. Then $I - T$ is injective if and only if $I - T$ is surjective. 
\end{prop}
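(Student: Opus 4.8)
The idea is that this proposition is essentially an immediate consequence of Corollary~\ref{cor R-F} once one identifies the right bijective operator. I would begin by setting $R \coloneqq I - (T - F)$, which is bijective precisely by the hypothesis. The point is then to observe the algebraic identity
\[
R - F = \bigl(I - (T - F)\bigr) - F = I - T + F - F = I - T,
\]
so that the operator $I - T$ whose injectivity and surjectivity we wish to compare is exactly of the form $R - F$ with $R$ bijective and $F$ finite-rank.

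With this observation in hand, I would simply invoke Corollary~\ref{cor R-F} applied to the bijective operator $R$ and the finite-rank operator $F$: it tells us directly that $R - F$ is injective if and only if $R - F$ is surjective. Rewriting $R - F$ as $I - T$ gives the claim.

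There is essentially no obstacle here beyond being careful with the bookkeeping: one must make sure the same finite-rank operator $F$ appears both in the definition of $R$ and as the perturbation subtracted from $R$, and that the hypothesis ``$I - (T-F)$ is bijective'' is literally the statement that $R$ is bijective. No infinite-dimensional or topological input is needed at this stage; the argument is purely algebraic, and all the real content has already been packaged into Lemma~\ref{lem rk} and Corollary~\ref{cor R-F}. (The analytic hypothesis $\|T - F\| < 1$ from Theorem~\ref{thm alt FA}, which guarantees that $I - (T-F)$ is bijective via the Neumann series, will be supplied later when this proposition is combined with the other pieces.)
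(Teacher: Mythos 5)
Your argument is correct and is essentially identical to the paper's own proof: both set $R \coloneqq I - (T - F)$, observe that $I - T = R - F$, and invoke Corollary~\ref{cor R-F}. Nothing is missing.
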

\begin{proof} Let $F$ be a finite rank operator such that $R \coloneqq I - (T -F)$ is bijective. Then $I - T = R - F$ and the proposition follows from Corollary~\ref{cor R-F}.
\end{proof}

\begin{proof}[Proof of Theorem~\ref{thm alt FA}] Suppose that (ii) does {\em not} hold. Then the operator $I - T$ is injective. Let $F$ be a finite-rank operator such that $||T -F|| < 1$. Then the operator $I -(T -F)$ is invertible, in particular, bijective. Hence by Proposition~\ref{prop I - T}, the operator $I - T$ is also surjective. By the Open Mapping Theorem, $I - T$ is invertible, i.e.\ (i) holds.
\end{proof}

Recall that the norm limit of (a net of) finite-rank operators is always compact. We say that a Banach space $E$ has the {\em approximation property} if every compact operator is a norm limit of finite-rank operators. Theorem~\ref{thm alt FA} proves the Fredholm Alternative (Theorem~\ref{thm FA}) for Banach spaces with the approximation property. Most familiar Banach spaces have the approximation property.\footnote{but not all -- see \cite{MR0402468}.}

As an example, we prove that Hilbert spaces have the approximation property. This proves the Fredholm Alternative for compact operators on a Hilbert space.  
\begin{lem}\label{lem BAP to AP} Let $E$ be a Banach space and let $P_{\alpha}$ be a net of uniformly bounded linear operators strongly converging to $I$. Then for any compact operator $T$, the net $P_{\alpha}T$ converges to $T$ in norm.
\end{lem}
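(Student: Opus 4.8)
The plan is to exploit the classical principle that on a (relatively) compact set, strong convergence upgrades to uniform convergence, combined with a three-$\varepsilon$ estimate. First I would set $C \coloneqq \sup_{\alpha} \|P_{\alpha}\|$, which is finite by the uniform boundedness hypothesis, and fix $\varepsilon > 0$. Let $B$ denote the closed unit ball of $E$; since $T$ is compact, the set $\overline{T(B)}$ is compact, so I can choose finitely many vectors $x_1, \dots, x_n \in B$ such that the balls of radius $\varepsilon$ about $Tx_1, \dots, Tx_n$ cover $T(B)$.

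Next I would use the strong convergence $P_{\alpha} \to I$: for each $i$ there is an index past which $\|P_{\alpha}Tx_i - Tx_i\| < \varepsilon$, and since the index set is directed and there are only finitely many $i$, there is a single $\alpha_0$ such that $\|P_{\alpha}Tx_i - Tx_i\| < \varepsilon$ for all $i = 1, \dots, n$ and all $\alpha \ge \alpha_0$. Then for an arbitrary $x \in B$, choosing $i$ with $\|Tx - Tx_i\| < \varepsilon$ and splitting
\[
\|P_{\alpha}Tx - Tx\| \le \|P_{\alpha}Tx - P_{\alpha}Tx_i\| + \|P_{\alpha}Tx_i - Tx_i\| + \|Tx_i - Tx\|
\]
gives the bound $C\varepsilon + \varepsilon + \varepsilon = (C+2)\varepsilon$ for all $\alpha \ge \alpha_0$. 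Taking the supremum over $x \in B$ yields $\|P_{\alpha}T - T\| \le (C+2)\varepsilon$ for $\alpha \ge \alpha_0$, and since $\varepsilon$ was arbitrary this is exactly the claimed norm convergence.

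I do not expect any serious obstacle here; the only point requiring a little care is the passage from "eventually close for each fixed $x_i$" to "eventually close simultaneously for all $x_i$", which is where directedness of the net (as opposed to a sequence) is used, and the observation that the first term in the splitting is controlled by the uniform bound $C$ rather than by any strong-convergence information. The essential idea — that compactness of $T(B)$ reduces a uniform estimate to a finite one — is the heart of the argument, and everything else is a routine $\varepsilon$-chase.
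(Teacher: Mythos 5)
Your proof is correct and follows essentially the same route as the paper: both reduce the problem to showing that $P_{\alpha}\to I$ uniformly on the totally bounded set $T(B)$, using the uniform bound $C$ on the $P_{\alpha}$. The only difference is one of packaging --- the paper cites the general fact that a uniformly equicontinuous, pointwise convergent net converges uniformly on totally bounded sets, whereas you unpack that fact explicitly via the finite $\varepsilon$-net and the three-$\varepsilon$ estimate.
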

\begin{proof}
Let $B$ denote the closed unit ball of $E$. Then, since $T$ is compact, the subset $T(B)$ is relatively compact, hence totally bounded in $E$, and for any $\alpha$, 
	\begin{align}
	||T-P_{\alpha}T|| &= \sup_{x \in B} ||(T-P_{\alpha}T)x||\\ 
	&= \sup_{y \in T(B)} ||(I - P_{\alpha})y||.
	\end{align}
By assumption, $I - P_{\alpha}$ converges pointwise to the $0$ operator. Moreover, since each $P_{\alpha}$ is linear and $\{P_{\alpha}\}$ uniformly bounded, the net $I - P_{\alpha}: E \to E$  is uniformly equicontinuous, hence converges uniformly on $T(B)$.	
\end{proof}

We say that a Banach space $E$ has the {\em bounded approximation property} if there exists a net of uniformly bounded finite-rank operators on $E$ converging to $I$ strongly. It follows from Lemma~\ref{lem BAP to AP}, that if $E$ has the bounded approximation property, then it has the approximation property (this result is due to Grothendieck). Finally, it follows from Parseval's identity, that Hilbert spaces have the bounded approximation property. This concludes our proof of the Fredholm Alternative for compact operators on a Hilbert space.

As an application, we prove a Fredholm Alternative for pseudodifferential operators of negative order.
\begin{thm} Let $X$ be a closed, smooth manifold and let \[P: C^{\infty}(X) \to C^{\infty}(X)\] be a pseudodifferential operator of order $m < 0$. Then $I -P$ is injective if and only if $I - P$ is surjective.
\end{thm}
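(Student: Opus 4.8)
The plan is to realize $I - P$ as a compact perturbation of the identity on the Hilbert space $L^2(X)$, apply the results already established here, and then transfer the conclusion back to $C^{\infty}(X)$ using elliptic regularity.

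First I would recall the mapping properties of pseudodifferential operators on a closed manifold: for every $s \in \R$, the operator $P$ extends to a bounded operator $H^s(X) \to H^{s-m}(X)$ between Sobolev spaces, compatibly with its action on $C^{\infty}(X)$. Taking $s = 0$ and using $m < 0$, this gives a bounded extension $P \colon L^2(X) \to H^{-m}(X)$; since $X$ is compact and $-m > 0$, the Rellich embedding $H^{-m}(X) \hookrightarrow L^2(X)$ is compact, so $P$ extends to a compact operator on $L^2(X)$, which I still write $P$. Its Hilbert-space adjoint $P^{*}$ is then also compact. Because a Hilbert space has the approximation property, there are finite-rank operators arbitrarily close in norm to $P$ and to $P^{*}$; hence Proposition~\ref{prop I - T} applies to each, and on $L^2(X)$ the operator $I - P$ is injective if and only if it is surjective, and likewise for $I - P^{*}$.

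Next I would observe that $I - P$ is elliptic of order $0$: in the order-$0$ calculus its principal symbol is the constant $1$, since $P$ has negative order. By elliptic regularity, every distributional solution of $(I-P)u = 0$ is smooth, so $I - P$ is injective on $C^{\infty}(X)$ if and only if it is injective on $L^2(X)$; and if $f \in C^{\infty}(X)$ and $u \in L^2(X)$ satisfy $(I-P)u = f$, then $u$ is smooth, so surjectivity of $I - P$ on $L^2(X)$ implies surjectivity on $C^{\infty}(X)$. Combining: if $I - P$ is injective on $C^{\infty}(X)$, it is injective on $L^2(X)$, hence surjective on $L^2(X)$ by Proposition~\ref{prop I - T}, hence surjective on $C^{\infty}(X)$. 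For the reverse implication, assuming $I - P$ is surjective on $C^{\infty}(X)$, I would first note that any $g \in L^2(X)$ with $(I-P^{*})g = 0$ satisfies $\langle (I-P)\varphi, g\rangle = \langle \varphi, (I-P^{*})g\rangle = 0$ for all $\varphi \in C^{\infty}(X)$; as $(I-P)$ maps $C^{\infty}(X)$ onto $C^{\infty}(X)$, which is dense in $L^2(X)$, this forces $g = 0$, so $I - P^{*}$ is injective --- hence, by Proposition~\ref{prop I - T}, surjective --- on $L^2(X)$. Then any $y \in L^2(X)$ with $(I-P)y = 0$ can be written $y = (I-P^{*})z$, whence $||y||^2 = \langle y, (I-P^{*})z\rangle = \langle (I-P)y, z\rangle = 0$, so $y = 0$ and $I - P$ is injective on $L^2(X)$, hence on $C^{\infty}(X)$.

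The genuinely analytic input here is the pseudodifferential calculus on a closed manifold --- the Sobolev mapping property, Rellich's compactness theorem, and elliptic regularity for the order-$0$ elliptic operator $I - P$ --- and this is the part I expect to be the main obstacle to making the argument fully self-contained; once it is granted, the rest is formal bookkeeping around Proposition~\ref{prop I - T} (applied to both $P$ and $P^{*}$).
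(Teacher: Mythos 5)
Your proof is correct, and its overall architecture is the one the paper uses: extend $P$ to a compact operator $\bar P$ on $L^{2}(X)$ (Sobolev boundedness of an operator of order $m<0$ plus the Rellich embedding), invoke the Hilbert-space Fredholm alternative already established via the approximation property and Proposition~\ref{prop I - T}, and transfer statements between $C^{\infty}(X)$ and $L^{2}(X)$ by elliptic regularity. The genuine difference lies in the direction ``surjective $\Rightarrow$ injective.'' The paper simply asserts that $I-P$ is surjective on $C^{\infty}(X)$ if and only if $I-\bar P$ is surjective on $L^{2}(X)$; elliptic regularity gives the implication from $L^{2}$ to $C^{\infty}$, but the converse implication by itself only yields that $I-\bar P$ has \emph{dense} range, and upgrading dense range to surjectivity needs an extra input (e.g.\ closedness of the range of $I-K$ for $K$ compact) that the paper does not supply. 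You sidestep this by dualizing: from surjectivity on $C^{\infty}(X)$ you deduce that $\ker(I-P^{*})=0$, apply the alternative to the compact operator $P^{*}$ to get surjectivity of $I-P^{*}$, and then annihilate $\ker(I-\bar P)$ by pairing each kernel element against a preimage under $I-P^{*}$. This costs an appeal to the Hilbert adjoint and to the compactness of $P^{*}$, both harmless here, and in return makes that direction fully rigorous using only the tools developed in the paper. A minor further remark: where you invoke general order-$0$ elliptic regularity, the paper's own remark shows the needed regularity by the elementary bootstrap $y=x+\bar P y$, which gains $-m$ derivatives at each step; your invocation is equally valid but heavier than necessary.
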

\begin{proof} 

Fix a smooth measure on $X$ and let $H = L^{2}(X)$. Then $P$ extends to a compact operator  $\bar P: H \to H$ and by elliptic regularity if $x \in C^{\infty}$ and $(I-\bar P)y = x$, then $y$ belongs to $C^{\infty}$ (cf.\ \cite{MR1852334}).

It follows that $I - P$ is injective if and only if $I - \bar P$ is injective, and $I - P$ is surjective if and only if $I - \bar P$ is surjective. But, by the Fredholm Alternative for compact operators on a Hilbert space, $I - \bar P$ is injective if and only $I- \bar P$ is surjective.


\end{proof}

\begin{rem}
We note that elliptic regularity is especially simple in our case. Indeed, for $s \ge 0$, let $H^{s}$ denote the $s$-Sobolev space. Then $H^{0} = L^{2}(X)$ and $\bigcap_{s \ge 0} H^{s} = C^{\infty}(X)$ and $\bar P (H^{s}) \subseteq H^{s - m}$ for $s \ge 0$.

Elliptic Regularity: If $x \in H^{s - m}$ and $(I - \bar P)y = x$ for some $y \in H^{s}$, then $y = x + \bar P y$ belongs to $H^{s - m}$. 
\end{rem}

\begin{rem} Since $C^{\infty}(X) = \bigcap_{s \ge 0} H^{s}$ is a Fr\'{e}chet space and $P$ is continuous, if $I-P$ is bijective, then it has a continuous inverse by the Open Mapping Theorem. Hence, either
\begin{enumerate}
\item[(i)] the operator $I - P: C^{\infty}(X) \to C^{\infty}(X)$ is invertible, or 
\item[(ii)] there exists a nonzero $y \in C^{\infty}(X)$, $y \ne 0$, such that $Py = y$.
\end{enumerate}
\end{rem}

\begin{ack} The author gratefully acknowledges support from the Centre for Symmetry and Deformation at the University of Copenhagen and the Danish National Research Council.
\end{ack}

\bibliographystyle{amsalpha}
\bibliography{../BibTeX/biblio}

\end{document}